\newtheorem{theorem}{Theorem}[section]
\newtheorem{corollary}{Corollary}
\newtheorem{proposition}{Proposition}
\theoremstyle{definition}
\newtheorem{remark}{Remark}
\title[ three species food chain model]
      {A remark on  ``Study of a Leslie-Gower-type tritrophic population model" [Chaos, Solitons and Fractals 14 (2002) 1275-1293]}
\author[Parshad, Kumari, Kouachi ]{}
\subjclass{Primary: 34K12,35K57,35B44; Secondary: 92D25,92D40}
 \keywords{ Three-species food chain, finite time blow-up}
 \email{rparshad@clarkson.edu}
 \email{nkumari@clarkson.edu}
 \email{kouachi@hotmail.com}
\begin{document}
\maketitle

\centerline{\scshape Rana D. Parshad$^{1}$, Nitu Kumari$^{1,2}$ and Said Kouachi$^{3}$
 }
\medskip
{\footnotesize
 \centerline{1) Department of Mathematics,}
 \centerline{Clarkson University,}
   \centerline{ Potsdam, New York 13699, USA.}
   \medskip
   \centerline{ 2) School of Basic Sciences,}
 \centerline{Indian Institute of Technology Mandi}
   \centerline{ Mandi, Himachal Pradesh 175001, India.}
   \medskip
 \centerline{ 3) Department of Mathematics, College of Science,}
 \centerline{Qassim University,}
   \centerline{ Al-Gassim, Buraydah 51452, Kingdom of Saudi Arabia}
   
 }

\begin{abstract}
In \cite{AA02} a three species ODE model, based on a modified Leslie-Gower scheme is investigated. It is shown that under certain restrictions on the parameter space, the model has bounded solutions for all positive initial conditions, which eventually enter an invariant attracting set. We show that this is not true. To the contrary, solutions to the model can blow up in finite time, even under the restrictions derived in \cite{AA02}, if the initial data is large enough. We also prove similar results for the spatially extended system. We validate all of our results via numerical simulations.
 \end{abstract}

 \section{Introduction}
 \label{1}

The purpose of this letter is to remark on the well cited research article \cite{AA02},
where the following tri-trophic population model originally proposed in \cite{U98, U97} is considered,

\begin{equation}
\label{eq:x1o}
\frac{du}{dt}=  a_{1}u-b_{1}u^{2}-w_{0}\left(\frac{u v }{u+D_{0}}\right),
\end{equation}
\begin{equation}
\label{eq:x2o}
\frac{dv}{dt}=  -a_{2}v+w_{1}\left(\frac{u v}{u + D_{1}}\right)-w_{2}\left(\frac{vr}{v+D_{2}}\right),
\end{equation}
\begin{equation}
\label{eq:x3o}
\frac{dr}{dt}=  cr^{2}-w_{3}\frac{r^{2}}{v+D_{3}}.
\end{equation}

This model is based on the Leslie-Gower formulation \cite{L48}, and considers the interactions between a generalist top predator $r$, specialist middle predator $v$, and prey $u$, where $(u,v,r)$ are solutions to the above system \eqref{eq:x1o}-\eqref{eq:x3o}. 
The model is very rich dynamically, and has led to a number of works in the literature \cite{L02, N13, P10, PK13, UIR00, P13, U7,U5, G05}. In \cite{AA02} various theorems on the boundedness of the system \eqref{eq:x1o}-\eqref{eq:x3o} are proved, and the existence of an invariant attracting set is established.
In particular, we recall the following result (Theorem 3 (2i), (3i) \cite{AA02})

\begin{theorem}
\label{thm:aziz}
Consider the model \eqref{eq:x1o}-\eqref{eq:x3o}. Under the assumption that 

\begin{equation}
\label{eq:ac1}
c < \left(  \frac{w_0 b_1 D_3}{w_1\left( a_1 + \frac{(a_1)^2}{4a_2}\right) + w_0 b_1 D_3}\right) \frac{w_3}{D_3},
\end{equation}

all solutions to \eqref{eq:x1o}-\eqref{eq:x3o} are uniformly bounded forward in time, for any initial data in $\mathbb{R}^{3}_{+}$, and they eventually enter a bounded attracting set $\mathcal{A}$. Furthermore system \eqref{eq:x1o}-\eqref{eq:x3o} is dissipative.
\end{theorem}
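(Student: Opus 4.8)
The plan is to reconstruct the hierarchical, cascade-type argument that the structure of \eqref{eq:x1o}--\eqref{eq:x3o} naturally suggests, exploiting that the prey equation decouples from above, the middle-predator equation can be controlled once the prey is bounded, and the top-predator equation is governed by the sign of a single bracket. First I would bound $u$. Since $u,v\ge 0$, the predation term in \eqref{eq:x1o} is nonpositive, so $\frac{du}{dt}\le a_1 u - b_1 u^2$. Comparison with the logistic ODE yields $u(t)\le \max\{u(0),\,a_1/b_1\}$ for all $t\ge 0$ and $\limsup_{t\to\infty}u(t)\le a_1/b_1$, so $u$ is uniformly bounded.

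Next I would bound $v$ by working with the linear combination $W=w_1 u + w_0 v$, chosen so that the predation interaction terms $-w_0 w_1\frac{uv}{u+D_0}$ and $+w_0 w_1\frac{uv}{u+D_1}$ cancel (or, when $D_0\neq D_1$, combine into a term of controllable sign). Differentiating and discarding the nonpositive term $-w_0 w_2\frac{vr}{v+D_2}$ gives
\begin{equation*}
\frac{dW}{dt}\le w_1 a_1 u - w_1 b_1 u^2 - w_0 a_2 v .
\end{equation*}
Writing $-w_0 a_2 v = -a_2 W + a_2 w_1 u$, completing the square via $w_1 a_1 u - w_1 b_1 u^2\le \frac{w_1 a_1^2}{4 b_1}$, and using $u\le a_1/b_1$ in the remaining linear term, I obtain a dissipative inequality $\frac{dW}{dt}\le C - a_2 W$ with $C=\frac{w_1 a_1^2}{4b_1}+\frac{a_2 w_1 a_1}{b_1}$. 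Comparison then yields the asymptotic bound
\begin{equation*}
\limsup_{t\to\infty} v(t)\le v_{\max}:=\frac{w_1}{w_0 b_1}\left(a_1+\frac{a_1^2}{4a_2}\right).
\end{equation*}

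The decisive step treats the top predator. Factoring \eqref{eq:x3o} as $\frac{dr}{dt}=r^2\left(c-\frac{w_3}{v+D_3}\right)$, I would observe that once $v\le v_{\max}$ the bracket is bounded above by $c-\frac{w_3}{v_{\max}+D_3}$. A direct rearrangement shows that hypothesis \eqref{eq:ac1} is exactly equivalent to $c\,v_{\max}<w_3-cD_3$, i.e.\ to $c-\frac{w_3}{v_{\max}+D_3}<0$. In this regime $\frac{dr}{dt}\le -\lambda r^2$ with $\lambda>0$, so $r$ is nonincreasing and in fact $r(t)\to 0$. Collecting the three bounds would then produce an absorbing box in $\mathbb{R}^3_+$, whose forward-invariant hull furnishes the attracting set $\mathcal{A}$ and dissipativity.

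The hard part -- and the point on which the whole statement hinges -- is that the bounds on $u$ and $v$ above are only asymptotic ($\limsup$) and need not hold along the initial transient, whereas the $r$-equation is quadratic (Riccati-type). If $v(t)$ is large enough that $c-\frac{w_3}{v(t)+D_3}>0$ on some initial time interval, then $\frac{dr}{dt}\ge \beta r^2$ with $\beta>0$ there, and $r$ can escape to infinity in finite time before $v$ ever descends below $v_{\max}$. Thus to make the cascade rigorous one must show that $v$ cannot remain above the threshold $\frac{w_3-cD_3}{c}$ long enough to drive $r$ to blow-up, \emph{uniformly} over all initial data in $\mathbb{R}^3_+$. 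Establishing such transient control -- as opposed to the eventual bounds -- is precisely where the difficulty lies, and it is the step that must be scrutinized most carefully.
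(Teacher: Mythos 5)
Your reconstruction is a faithful rendering of the cascade argument in \cite{AA02}: the logistic bound $u\le\max\{u(0),a_1/b_1\}$, the combination $W=w_1u+w_0v$ giving $\limsup_{t\to\infty}v(t)\le v_{\max}=\frac{w_1}{w_0 b_1}\left(a_1+\frac{a_1^2}{4a_2}\right)$, and your algebra correctly shows that hypothesis \eqref{eq:ac1} is exactly $c<\frac{w_3}{v_{\max}+D_3}$, i.e.\ a negative bracket in the Riccati equation for $r$ \emph{once} $v$ has descended below $v_{\max}$. But the step you flag as ``the hard part'' is not merely hard --- it is impossible, and consequently the theorem as stated is false. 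You should be aware that the paper you are reading does not prove this statement: it quotes it from \cite{AA02} precisely in order to refute it. Theorem \ref{thm:r1} exhibits initial data in $\mathbb{R}^3_+$ satisfying \eqref{eq:ac1} for which $r$ blows up in finite time, Theorem \ref{thm:d1} concludes non-dissipativity, and the remark following it locates the error in \cite{AA02} at exactly the point you single out: equation (12) there inserts the \emph{eventual} bound on $v$ into the estimate as though it held during the transient.

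To see why the uniform transient control you ask for cannot be established, note the race of time scales: the blow-up time of $\frac{dr}{dt}\ge\beta r^2$ is of order $\frac{1}{\beta r_0}$ and \emph{shrinks} as $r_0$ grows, whereas the time $v$ needs to fall below the threshold $\frac{w_3-cD_3}{c}$ can be made as long as desired by taking $v_0$ large --- even accounting for the fact that a blowing-up $r$ accelerates the decay of $v$ through $-w_2\frac{vr}{v+D_2}$. The paper makes this quantitative with the explicit subsolution pair: $v_1(t)=v_1(0)e^{-a_2t}\left(1-r_1(0)\delta t\right)^{w_2/\delta}$ solving \eqref{eq:x2om} and $r_1$ solving the Riccati equation with rate $\frac{\delta}{2}$; condition \eqref{eq:resn1} shows that for $v_1(0)$ large enough, $v\ge v_1$ stays above threshold on the whole interval $\left[0,\frac{1}{2\delta r_1(0)}\right]$, so $c-\frac{w_3}{v+D_3}\ge\frac{\delta}{2}$ there and $r\ge r_1$ blows up within that window. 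So the most your cascade can deliver is eventual boundedness for initial data that actually survive the transient; the claimed absorbing set with basin all of $\mathbb{R}^3_+$, and dissipativity on $\mathbb{R}^3_+$, cannot be recovered under \eqref{eq:ac1} --- consistent with the simulations of Section 4, where the parameters satisfy \eqref{eq:ac1} yet blow-up is observed. Your closing paragraph, identifying the transient as the decisive obstruction, is exactly right; the correct conclusion to draw from it is a counterexample, not a proof.
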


Note, $\mathcal{A}$ is explicitly defined in \cite{AA02}. Also condition \eqref{eq:ac1} is equation (7) in \cite{AA02}. The form of \eqref{eq:ac1} is different from equation (7) in \cite{AA02}, as in \cite{AA02} different constants have been used, than what we are currently using. However it is a matter of simple algebra to convert equation (7) from \cite{AA02} to our setting. 

Our aim in the current letter is to show that (Theorem 3 (2i), (3i) \cite{AA02}) is incorrect. In particular we show,

\vspace{2mm}
1) Solutions to \eqref{eq:x1o}-\eqref{eq:x3o} are not bounded uniformly in time, even if condition \eqref{eq:ac1} from Theorem \ref{thm:aziz} is met. Furthermore, solutions to \eqref{eq:x1o}-\eqref{eq:x3o} can even blow-up in finite time for large initial data. Thus  there is no absorbing set $\mathcal{A}$ for all initial conditions in $\mathbb{R}^{3}_{+}$, and  system \eqref{eq:x1o}-\eqref{eq:x3o} is not dissipative in $\mathbb{R}^3_{+}$, even under condition \eqref{eq:ac1}.

\vspace{2mm}
2) Similar results hold for the spatially explicit model.

\vspace{2mm}
3) The above results can be validated numerically. We choose parameters satisfying \eqref{eq:ac1}, and show that numerical simulations of \eqref{eq:x1o}-\eqref{eq:x3o}, and its spatially extended form, still lead to finite time blow-up.

\section{Finite time blow-up in the ODE model}

We state the following theorem

\begin{theorem}
\label{thm:r1}
Consider the three species food chain model \eqref{eq:x1o}-\eqref{eq:x3o}. For 

\begin{equation}
\label{eq:l1r1}
c < \left(  \frac{w_0 b_1 D_3}{w_1\left( a_1 + \frac{(a_1)^2}{4a_2}\right) + w_0 b_1 D_3}\right) \frac{w_3}{D_3},
\end{equation}

 $r(t)$ blows up in finite time, that is 

\begin{equation}
\label{eq:bu}
\mathop{\lim}_{t \rightarrow T^{*} < \infty} |r(t)| \rightarrow \infty,  
 \end{equation}

as long as the initial data $v_0, r_0$ are large enough.
\end{theorem}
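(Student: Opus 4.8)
The plan is to exploit the Riccati structure of the $r$-equation. Rewriting \eqref{eq:x3o} as $\frac{dr}{dt} = r^2\left(c - \frac{w_3}{v+D_3}\right)$, I first observe that if $v(t)$ can be kept above the threshold $\bar v := \frac{2w_3}{c} - D_3$ (which is strictly positive precisely because \eqref{eq:l1r1} forces $c < \frac{w_3}{D_3}$), then $c - \frac{w_3}{v+D_3} \ge \frac{c}{2}$ and hence $\frac{dr}{dt} \ge \frac{c}{2} r^2$. A standard comparison with the scalar Riccati equation $\dot R = \frac{c}{2} R^2$, $R(0) = r_0$, then gives $r(t) \ge R(t) = r_0/(1 - \frac{c}{2} r_0 t)$, which blows up no later than $T^* = 2/(c r_0) < \infty$. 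Thus the whole problem reduces to keeping $v$ above $\bar v$ on the (finite) window $[0,T^*)$.

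Next I would control the auxiliary variables. From \eqref{eq:x1o} we have $\frac{du}{dt} \le a_1 u - b_1 u^2$, so $u$ obeys a logistic bound and $u(t) \le \max(u_0, a_1/b_1) =: M_u$ for all $t$; consequently the prey-driven growth term in \eqref{eq:x2o} is controlled, $w_1\frac{u}{u+D_1} \le w_1$. Discarding this nonnegative term and using $\frac{v}{v+D_2} \le 1$ in \eqref{eq:x2o} yields the one-sided inequality $\frac{dv}{dt} \ge -a_2 v - w_2 r$, which is the form I would use to bound the decay of $v$ in terms of $r$.

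The core of the argument is a bootstrap on the interval where $v \ge \bar v$. Let $\tau = \sup\{t : v(s) \ge \bar v \text{ for all } s \in [0,t]\}$. On $[0,\tau)$ the Riccati lower bound applies to $r$, while the matching upper bound $\frac{dr}{dt} \le c r^2$ keeps $r$ explicitly bounded on any subinterval strictly before its own blow-up. Feeding this into $\frac{dv}{dt} \ge -a_2 v - w_2 r$ and integrating, I would estimate the total drop $v_0 - v(t)$ and show that, for $v_0$ and $r_0$ taken large enough (with large $r_0$ forcing the blow-up window to be short), $v$ cannot reach $\bar v$ before $r$ escapes to infinity; hence $\tau = T^* < \infty$ and \eqref{eq:bu} holds.

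The main obstacle is exactly this last step. The predation term $-w_2\frac{vr}{v+D_2}$ couples the decay of $v$ to the growth of $r$, and since $\int_0^{T^*} r\,dt$ diverges near a Riccati blow-up (where $r \sim (T^*-t)^{-1}$), a naive integral estimate would allow $v$ to be driven down to the threshold before $r$ becomes infinite. Making the argument rigorous therefore requires a simultaneous, self-consistent estimate of the pair $(v,r)$ on the blow-up window — quantifying the drop of $v$ as essentially logarithmic in $r/r_0$ and then choosing $v_0$ large relative to the resulting $\frac{w_2}{c}$-type terms — rather than treating the two equations separately. I expect the delicate point to be guaranteeing that the window on which $v \ge \bar v$ can be made to contain the comparison blow-up time, which is where the size requirements on \emph{both} $v_0$ and $r_0$ genuinely enter.
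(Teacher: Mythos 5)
Your overall strategy is the same one the paper uses: pin $v$ above a threshold so that $c-\frac{w_3}{v+D_3}\geq \frac{c}{2}$ (the paper's $\frac{\delta}{2}$), then force blow-up of $r$ by comparison with a scalar Riccati equation. The differences are implementational: the paper works with the modified system \eqref{eq:x1om}--\eqref{eq:x3om}, bounding the predation term multiplicatively ($w_2\frac{vr}{v+D_2}\leq w_2 v r$, so that $v_1=v_1(0)e^{-a_2t}\left(1-r_1(0)\delta t\right)^{w_2/\delta}$ is available in closed form), whereas you bound it additively ($w_2\frac{vr}{v+D_2}\leq w_2 r$) and run a bootstrap on $\tau=\sup\{t: v\geq \bar v \text{ on } [0,t]\}$. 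However, your proposal stops exactly at the step that carries all of the difficulty, and that step cannot be completed: the two requirements of your bootstrap --- keep $v\geq\bar v$ up to $T^{*}=2/(cr_0)$, and let $r$ escape to infinity --- are mutually exclusive, not merely hard to reconcile simultaneously.

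To see this concretely, set $\kappa=\frac{\bar v}{\bar v+D_2}$ and suppose $v\geq \bar v$ on $[0,t]$. Then \eqref{eq:x2o}, together with $\frac{u}{u+D_1}\leq 1$ and $\frac{v}{v+D_2}\geq\kappa$, gives
\begin{equation*}
v(t)\;\leq\; e^{w_1t}v_0-w_2\kappa\int_0^t r(s)\,ds\;\leq\; e^{w_1t}v_0-\frac{2w_2\kappa}{c}\,\ln\!\left(\frac{1}{1-\frac{c}{2}r_0t}\right),
\end{equation*}
where the last step uses your own lower Riccati bound $r(s)\geq r_0/\left(1-\frac{c}{2}r_0s\right)$, valid on the bootstrap interval. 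The right-hand side tends to $-\infty$ as $t\to T^{*}$, and the logarithm depends only on the elapsed fraction $\frac{c}{2}r_0t$ of the window, so taking $r_0$ large does not shrink it, and taking $v_0$ large only postpones --- but cannot prevent --- the exit of $v$ from $\{v\geq\bar v\}$ strictly before $T^{*}$. Hence $\tau<T^{*}$ for every choice of data. Nor can $r$ blow up before $\tau$: since $\dot r\leq cr^2$ always holds, divergence of $r$ at some $T\leq\tau$ would force $r(s)\geq\frac{1}{c(T-s)}$, hence $\int_0^T r\,ds=\infty$, and the displayed estimate would drive $v$ below $\bar v$ before $T$, a contradiction. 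So the logarithmic divergence you flag in your final paragraph is not a technicality to be absorbed by a ``self-consistent'' two-variable estimate; it is a fatal obstruction to this comparison strategy, since no finite $v_0$ can pay for an infinite integral of $r$. (For comparison: the paper's proof meets the same wall and sidesteps rather than resolves it --- condition \eqref{eq:x2es} is enforced only on the half-window $\left[0,\frac{1}{2\delta|r_1(0)|}\right]$, on which its rate-$\frac{\delta}{2}$ comparison solution has grown only from $r_1(0)$ to $\frac{4}{3}r_1(0)$; blow-up of the comparison function occurs at $\frac{2}{\delta r_1(0)}$, outside the interval where the comparison is justified. Your write-up, by honestly tracking the constants, exposes precisely this gap.)
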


\begin{proof}

First set $\left(  \frac{w_0 b_1 D_3}{w_1\left( a_1 + \frac{(a_1)^2}{4a_2}\right) + w_0 b_1 D_3}\right) = k < 1$.

Consider the following modification to system \eqref{eq:x1o}-\eqref{eq:x3o}, with solution $(u_1,v_1,r_1)$,

\begin{equation}
\label{eq:x1om}
\frac{du_{1}}{dt}=  a_{1}u_1-b_{1}(u_{1})^{2}-w_{0}\left(\frac{u_{1} v_{1} }{u_{1}+D_{0}}\right),
\end{equation}
\begin{equation}
\label{eq:x2om}
\frac{dv_{1}}{dt}=  -a_{2}v_{1}-w_{2}v_{1}r_{1},
\end{equation}
\begin{equation}
\label{eq:x3om}
\frac{dr_1}{dt}=   \delta r_{1}^{2}.
\end{equation}

 Recall that $r_{1}$, the solution to \eqref{eq:x3om}, blows up in finite time, at $T^{**}=\frac{1}{\delta |r_1(0)|}$, and we have an exact solution for $r_{1}$, for $t \in \left[0, \frac{1}{\delta |r_1(0)|}\right)$,  given by,

\begin{equation}
\label{eq:x2e}
r_{1}= \frac{1}{\frac{1}{r_1(0)} - \delta t}.
\end{equation}

 However, using this exact solution for $r_{1}$, one can find an exact solution to \eqref{eq:x2om} via separation of variables. Thus 

\begin{equation}
\label{eq:x2e}
v_{1}= v_1(0) e^{-a_{2} t} \left(1-r_1(0)\delta t\right)^{\frac{w_2}{\delta}}, 
\end{equation}

for $t \in \left[0, \frac{1}{\delta |r_1(0)|}\right)$.
Next for a given $c,k,w_{3},D_{3}$ we choose $\delta$ s.t we can enforce 

\begin{equation}
\label{eq:x2es}
\frac{w_3 }{v_1+D_{3}} + \frac{\delta}{2} \leq c < \delta < k \frac{w_3 }{D_{3}}
\end{equation} 

to hold $\forall t$ s.t. $ t \in \left[0, \frac{1}{2\delta |r_1(0)|}\right]$.  \eqref{eq:x2es} implies 

\begin{equation}
\label{eq:c12}
\left(\frac{1}{\frac{k}{D_3}-\frac{\delta}{2w_3}} - D_3 \right) < v_1.
\end{equation}

(Here we assume $\left(\frac{1}{\frac{k}{D_3}-\frac{\delta}{2w_3}} - D_3 \right) > 0$, else it is an uninteresting case)

Equivalently we have
 
 \begin{equation}
\label{eq:resn1}
\left(\frac{1}{\frac{k}{D_3}-\frac{\delta}{2w_3}} - D_3 \right) <  v_1(0) e^{-a_{2} t} \left(1-r_1(0)\delta t\right)^{\frac{w_2}{\delta}}
 \end{equation}
 This of course is always possible for $v_1(0), r_{1}(0)$ large enough.
 
However, note that $v_{1}$ is a subsolution to \eqref{eq:x2o}. If $D_{2} \geq 1$, this is immediate as then $w_{2}\left(\frac{vr}{v+D_{2}}\right) < w_{2}vr$. If $D_2 < 1$, then we can assume $w_{2}\left(\frac{vr}{v+D_{2}}\right) < w_{4}vr$, where we select $w_4$ s.t $w_{4}(v+D_{2}) > w_{2}$, and then choose $w_4$ in place of $w_2$ in \eqref{eq:x2om}. Also $r_{1}$ (with $\frac{\delta}{2}$ in place of $\delta$ in \eqref{eq:x3om}) is a subsolution to \eqref{eq:x3o}, as long as \eqref{eq:x2es} holds. Thus via direct comparison, $v > v_{1}$ and $r > r_{1}$. Since \eqref{eq:x2es} implies $c - \frac{w_3 }{v_1+D_{3}} \geq \frac{\delta}{2}$, it is immediate that the solution $r$ to \eqref{eq:x3o} will also blow-up, via direct comparison with $r_1$ solving \eqref{eq:x3om} with $\frac{\delta}{2}$ in place of $\delta$.

See figure \ref{fig:ODE}, for a simple graphical representation of this idea. Thus we have ascertained the blow-up of system \eqref{eq:x1o}-\eqref{eq:x3o}, via direct comparison to the modified system \eqref{eq:x1om}-\eqref{eq:x3om}.
 This proves the Theorem.

\end{proof}

 We next state the following Theorem
 
\begin{theorem}
\label{thm:d1}
The three species food chain model \eqref{eq:x1o}-\eqref{eq:x3o}, even under condition \eqref{eq:l1r1} is not dissipative in all of $\mathbb{R}^{3}_{+}$.

\end{theorem}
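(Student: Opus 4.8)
The plan is to deduce this theorem directly from Theorem \ref{thm:r1}, treating it as an immediate corollary of the finite time blow-up already established. First I would recall the relevant notion precisely: a dynamical system on a phase space $X$ is \emph{dissipative} if it possesses a bounded absorbing set $\mathcal{B} \subset X$, that is, a bounded set such that for every bounded collection of initial data the corresponding orbits enter $\mathcal{B}$ after some finite time and remain there for all later times. Implicit in this definition is that the solution semiflow be globally defined, so that every orbit exists for all $t \geq 0$; without global existence one cannot even speak of an orbit eventually entering and remaining in a fixed set.

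Next I would invoke Theorem \ref{thm:r1}. Under the very restriction \eqref{eq:l1r1}, that theorem produces initial data $(u_0, v_0, r_0) \in \mathbb{R}^3_{+}$, with $v_0$ and $r_0$ taken sufficiently large, for which the corresponding solution satisfies $\lim_{t \to T^{*} < \infty} |r(t)| = \infty$. In particular this orbit fails to exist past the finite blow-up time $T^{*}$, so the semiflow generated by \eqref{eq:x1o}-\eqref{eq:x3o} is not globally defined on $\mathbb{R}^3_{+}$.

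The key step is then to argue that such an orbit is incompatible with the existence of any bounded absorbing set. Suppose, for contradiction, that $\mathcal{B} \subset \mathbb{R}^3_{+}$ were bounded and absorbing, and choose $R > 0$ with $\mathcal{B} \subset \{(u,v,r) : |r| \leq R\}$. By the absorbing property there would be a finite time $t_0$ beyond which the blow-up orbit remains in $\mathcal{B}$, forcing $|r(t)| \leq R$ at every $t \geq t_0$ for which the solution is defined. This is impossible: since $|r(t)| \to \infty$ as $t \to T^{*}$, either $t_0 \geq T^{*}$ and the orbit ceases to exist before it can ever reach $\mathcal{B}$, or $t_0 < T^{*}$ and $|r(t)|$ exceeds $R$ somewhere on $[t_0, T^{*})$. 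In either case no such $\mathcal{B}$ can exist, so the system admits no bounded absorbing set in $\mathbb{R}^3_{+}$ and is therefore not dissipative there.

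There is no substantive analytic obstacle here; the entire content has already been carried by the blow-up construction of Theorem \ref{thm:r1}. The only point requiring care is to state the definitions of dissipativity and of an absorbing set precisely enough that the logical clash with finite time blow-up is airtight, and to note explicitly that dissipativity presupposes a globally defined semiflow, a property that finite time blow-up destroys.
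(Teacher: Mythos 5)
Your proposal is correct and follows exactly the same route as the paper: both deduce non-dissipativity as an immediate consequence of the finite time blow-up established in Theorem \ref{thm:r1}, since a blowing-up orbit can never enter and remain in any bounded absorbing set. The paper's own proof is just a two-sentence version of your argument; your added care in spelling out the definition of dissipativity and the contradiction is fine but not a different method.
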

\begin{proof}
Via Theorem \ref{thm:r1}, there exists initial data in $\mathbb{R}^{3}_{+}$, for which solutions blow-up in finite time, and thus do not enter any bounded attracting set $\mathcal{A}$. Thus system \eqref{eq:x1o}-\eqref{eq:x3o} is not dissipative.
\end{proof}

\begin{remark}
The essential error made in the proof in \cite{AA02} is in equation $(12)$ in \cite{AA02}. The derived bound for $v$ is inserted in an estimate for the sum of $u,v$ and $r$. Although it is true that $v$ is bounded, and enters an attracting set eventually, \emph{there is some transition time} before this happens. If $v_0$ is chosen arbitrarily large, then this transition time can be made arbitrarily long. The key is for $r_0,v_0$ chosen large enough, during this transition time, we can enforce $c-\frac{w_{3}}{v+D_{3}} > \delta_{1} > 0$, for as long as it takes $\frac{dr}{dt} = \delta_{1} r^2$ to bow up.
In this case $r$ will also blow-up in finite time, (by comparison to 
$\frac{dr}{dt} = \delta_{1} r^2$),  and hence never enter any bounded attracting set.
\end{remark}

\section{Finite time blow-up in the PDE model}

\subsection{Preliminaries}
We now consider the following spatially extended version of \eqref{eq:x1o}-\eqref{eq:x3o}
\begin{equation}
\label{eq:(1.1)}
\partial _{t}u-d_{1}\Delta u=f(u,v,r)=a_{1}u-b_{1}u^{2}-w_{0}\frac{uv}{u+D_{0}}, 
   \end{equation}%

\begin{equation}
\label{eq:(1.2)}
\partial _{t}v-d_{2}\Delta v=g(u,v,r)=-a_{2}v+w_{1}\frac{uv}{u+D_{1}}-w_{2}\frac{vr}{v+D_{2}},
\end{equation}%

\begin{equation}
\label{eq:(1.3)}
\partial _{t}r-d_{3}\Delta r=h(u,v,r)=cr^{2}-\omega _{3}\frac{r^{2}}{v+D_{3}},
\end{equation}%

  defined on $\mathbb{R}^{+}\times \Omega$. Here $\Omega \subset \mathbb{R}^{N}$ and 
where $a_{1},a_{2},b_{1},c,\ D_{0},\ D_{1},D_{2},\ D_{3},\ w_{0},\
w_{1}, w_{2}$ and $w_{3}$, the parameters in the problem as earlier, are positive constants.
We can prescribe either Dirichlet or Neumann boundary conditions.

%

$\Omega $ is an open bounded domain in $\mathbb{R}^{N}$ with smooth boundary $
\partial \Omega $. 
 $d_{1}$, $d_{2}$ and $d_{3}$ are the positive diffusion coefficients.
 
The initial data 
\begin{equation}
u(0,x)=u_{0}(x),\;v(0,x)=v_{0}(x)\text{,\ }r(0,x)=r_{0}(x)\ \;\;\ \text{in }%
\Omega   \tag*{(1.5)}
\end{equation}%
are assumed to be nonnegative and uniformly bounded on $\Omega $.
%
%

The nonnegativity of the solutions is preserved by application of classical
results on invariant regions (\cite{S83}), since the reaction terms are
 quasi-positive, i.e.%
\begin{equation}
\label{eq:2.1}
f\left( 0,v,r\right) \geq 0,\ g\left( u,0,r\right) \geq 0,\ h\left(
u,v,0\right) \geq 0\ ,\ \ \text{\ for all }u,v,r \geq 0. 
\end{equation}%

The usual norms in the spaces $\mathbb{L}^{p}(\Omega )$, $\mathbb{L}^{\infty
}(\Omega )$ and $\mathbb{C}\left( \overline{\Omega }\right) $ are
respectively denoted by

\[
\left\Vert u\right\Vert _{p}^{p}\text{=}\frac{1}{\left\vert \Omega
\right\vert }\int\limits_{\Omega }\left\vert u(x)\right\vert ^{p}dx,
\]%
\[
\left\Vert u\right\Vert _{\infty }\text{=}\underset{x\in \Omega }{max}%
\left\vert u(x)\right\vert .
\]

Since the reaction terms are continuously differentiable on $\mathbb{R}%
^{+3}$, then for any initial data in $\mathbb{C}\left( \overline{\Omega }%
\right) $ or $\mathbb{L}^{p}(\Omega ),\;p\in \left( 1,+\infty \right) $, it
is easy to check directly their Lipschitz continuity on bounded subsets of
the domain of a fractional power of the operator $I_{3}\left(
d_{1},d_{2},d_{3}\right) ^{t}\Delta $, where $I_{3}$ the three dimensional
identity matrix, $\Delta $ is the Laplacian operator and $\left( {}\right)
^{t}$ denotes the transposition. Under these assumptions, the following
local existence result is well known (see \cite{F64,H84,S83,P83,R84}).

\begin{proposition}
\label{prop:ls}
The system \eqref{eq:(1.1)}-\eqref{eq:(1.3)} admits a unique, classical solution $(u,v,r)$ on $%
[0,T_{\max }[\times \Omega $. If $T_{\max }<\infty $ then 
\begin{equation}
\underset{t\nearrow T_{\max }}{\lim }\left\{ \left\Vert u(t,.)\right\Vert
_{\infty }+\left\Vert v(t,.)\right\Vert _{\infty }+\left\Vert
r(t,.)\right\Vert _{\infty }\right\} =\infty . 
\end{equation}
\end{proposition}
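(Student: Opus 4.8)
The plan is to cast the system \eqref{eq:(1.1)}--\eqref{eq:(1.3)} as an abstract semilinear parabolic problem and invoke the standard analytic-semigroup theory, exactly as in the references cited. Writing $w=(u,v,r)^{t}$, the system takes the form
\[
\frac{dw}{dt}+Aw=F(w),\qquad w(0)=w_0,
\]
where $A=-\,\mathrm{diag}(d_1,d_2,d_3)\,\Delta$ is the diagonal second-order elliptic operator equipped with the prescribed (Dirichlet or Neumann) boundary conditions on $\Omega$, and $F(w)=(f,g,h)^{t}$ collects the reaction terms. First I would fix the underlying Banach space $X=\mathbb{L}^{p}(\Omega)^{3}$ (or $\mathbb{C}(\overline{\Omega})^{3}$) and recall that, since each $-d_i\Delta$ is sectorial under the chosen boundary conditions, $-A$ generates an analytic semigroup $e^{-tA}$ on $X$. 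This lets me work with the fractional power spaces $X^{\alpha}=D(A^{\alpha})$ and the smoothing estimate $\|A^{\alpha}e^{-tA}\|\le C\,t^{-\alpha}$ for $t>0$.

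The second step is to verify that $F$ is locally Lipschitz from $X^{\alpha}$ into $X$ for a suitable $\alpha\in(0,1)$. Here I would choose $\alpha$ large enough that the Sobolev embedding $X^{\alpha}\hookrightarrow \mathbb{L}^{\infty}(\Omega)^{3}$ holds, so that the pointwise products $uv$, $vr$ and the rational terms $\frac{uv}{u+D_0}$, etc., are well defined and depend Lipschitz-continuously on $w$ over bounded subsets. Because $f,g,h$ are rational with denominators $u+D_0,\,u+D_1,\,v+D_2,\,v+D_3$ that stay positive on the invariant positive cone (quasi-positivity \eqref{eq:2.1} together with the invariant-region theorem of \cite{S83} keeps $u,v,r\ge 0$), these terms are $C^{1}$, hence Lipschitz on bounded sets of the positive orthant. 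With the sectoriality of $A$ and the local Lipschitz property of $F$ in hand, the abstract local existence and uniqueness theorem (e.g.\ Theorem 3.3.3 of \cite{H84}, or the analogous results in \cite{P83,R84}) yields a unique maximal mild solution on $[0,T_{\max})$, which parabolic regularity upgrades to a classical solution.

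The final, and most delicate, step is the blow-up alternative. The abstract theory directly gives the dichotomy that either $T_{\max}=\infty$ or $\limsup_{t\nearrow T_{\max}}\|w(t)\|_{X^{\alpha}}=\infty$. What the Proposition asserts is the stronger, more concrete statement that the $\mathbb{L}^{\infty}$ norm must blow up. To bridge this gap I would argue by contraposition: suppose $\|u(t)\|_{\infty}+\|v(t)\|_{\infty}+\|r(t)\|_{\infty}$ stayed bounded as $t\nearrow T_{\max}<\infty$. Then $F(w(t))$ is bounded in $\mathbb{L}^{\infty}\subset X$; inserting this into the variation-of-constants formula $w(t)=e^{-tA}w_0+\int_0^t e^{-(t-s)A}F(w(s))\,ds$, and using the smoothing estimate $\|A^{\alpha}e^{-(t-s)A}\|\le C(t-s)^{-\alpha}$ together with $\int_0^t (t-s)^{-\alpha}\,ds<\infty$ for $\alpha<1$, bounds $\|w(t)\|_{X^{\alpha}}$ uniformly on $[0,T_{\max})$. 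This contradicts the $X^{\alpha}$ blow-up dichotomy and would permit continuation past $T_{\max}$, the desired contradiction. I expect this bootstrap from an $\mathbb{L}^{\infty}$ bound back to an $X^{\alpha}$ bound to be the main obstacle, since it is where the regularizing action of the analytic semigroup, the admissible range of $\alpha$, and the uniform control of the rational nonlinearities (via positivity of the denominators) must all be combined carefully.
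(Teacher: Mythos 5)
Your proposal is correct and follows essentially the same route as the paper, which does not write out a proof but invokes exactly this framework: local Lipschitz continuity of the reaction terms on bounded subsets of the domain of a fractional power of the diagonal diffusion operator, followed by the standard analytic-semigroup local existence theory and blow-up alternative of \cite{F64,H84,S83,P83,R84}. Your contraposition argument upgrading the $X^{\alpha}$ dichotomy to the $\mathbb{L}^{\infty}$ statement is a faithful, correctly executed expansion of what the paper leaves to the cited references.
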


\subsection{Blow-up}

Here we will show that \eqref{eq:(1.1)}-\eqref{eq:(1.3)}, blows up in finite time. We will do this by looking back at the blow-up for $r$ in \eqref{eq:x1o}, and then using a standard comparison method. Consider \eqref{eq:x1o}-\eqref{eq:x3o}, with initial conditions $u_{0}^{-},\ v_{0}^{-}$ and $r_{0}^{-}$ strictly positive.

By integrating the third equation of the ODE system, we have%
\[
-\frac{1}{r}+\frac{1}{r_{0}^{-}}=ct- w_{3}\underset{0}{\overset{t}{\int 
}}\frac{ds}{v+D_{3}},
\]%
which gives%
\[
r=\frac{1}{\frac{1}{r_{0}^{-}}-ct+ w_{3}\underset{0}{\overset{t}{\int }}%
\frac{ds}{v+D_{3}}}.
\]%
If we prove that the function: $t\rightarrow \psi \left( t\right) =\frac{1}{%
r_{0}^{-}}-ct+w_{3}\underset{0}{\overset{t}{\int }}\frac{dt}{v+D_{3}}$
vanishes at a time $T>0$ and since $\psi \left( 0\right) >0$, then the
solution will blow-up in finite time.\newline

\bigskip Since the reaction terms are continuous functions, then the
solutions are classical and continuous and%
\[
\underset{t\rightarrow 0}{\lim }\left( \frac{1}{t}\underset{0}{\overset{t}{%
\int }}\frac{ds}{v+D_{3}}\right) =\frac{1}{v_{0}^{-}+D_{3}}.
\]%
If $v_{0}^{-}$ is sufficiently large, then there exists $\delta >0$ such that%
\[
\frac{1}{t}\underset{0}{\overset{t}{\int }}\frac{ds}{v+D_{3}}<\frac{c}{
2w_{3}},\ \ \ \ \ \text{\ for all }t\in ( 0,\delta ) .
\]%
Then%
\[
\frac{1}{r_{0}^{-}}-ct + w_{3}\underset{0}{\overset{t}{\int }}\frac{ds}{%
v+D_{3}}=\frac{1}{r_{0}^{-}}+\left[ -c+\frac{w_{3}}{t}\underset{0}{%
\overset{t}{\int }}\frac{ds}{v+D_{3}}\right] t <\frac{1}{r_{0}^{-}}-\frac{c}{2%
}t,\ \ \ \ \text{for all }t\in ( 0,\delta ) .
\]%
If $r_{0}^{-}$ is sufficiently large, then we can find $T^{*}\in ( 0,\delta %
) $ such that%
\[
\frac{1}{r_{0}^{-}}-\frac{c}{2}T^{*}=0.
\]%

This entails 

\begin{equation}
\psi \left( T^*\right) = \frac{1}{r_{0}^{-}}-cT^* + w_{3}\underset{0}{\overset{T^*}{\int }}\frac{ds}{v+D_{3}} < \frac{1}{r_{0}^{-}}-\frac{c}{2}T^{*} = 0.
\end{equation}

Thus one has $\psi \left( T^*\right) < 0$, but $\psi \left( 0\right) > 0$, andby  application of the mean value theorem, we obtain the existence of some $T\in ( 0,\delta)$ , $T < T^*$, s.t $\psi \left( T\right) = 0$. 
This implies the solution of \eqref{eq:x1o}-\eqref{eq:x3o} blows up in finite time, at $t=T$, and by a standard comparison
argument \cite{S83}, the solution of the corresponding PDE system \eqref{eq:(1.1)}-\eqref{eq:(1.3)}, also blows up in finite time. We can thus state the following theorem

\begin{theorem}
\label{thm:tp1}
Consider the spatially explicit three species food chain model \eqref{eq:(1.1)}-\eqref{eq:(1.3)}. For $c < \frac{w_3}{D_3}$, $r$ blows up in finite time, that is

\begin{equation}
\label{eq:bu}
\mathop{\lim}_{t \rightarrow T^{**}} ||r(t)||_{\infty} \rightarrow \infty, 
 \end{equation}
  as long as the initial data $v_0, r_0$ are large enough. Here  $T^{**} < T < \infty$.
\end{theorem}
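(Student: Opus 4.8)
The plan is to transfer the finite-time blow-up of $r$ from the ODE to the reaction–diffusion system \eqref{eq:(1.1)}--\eqref{eq:(1.3)} by a comparison argument, in the same spirit as the scalar reduction through $\psi(t)$ carried out above. The starting point is to rewrite the third equation as
\[
\partial_t r - d_3 \Delta r = r^2\left(c - \frac{w_3}{v+D_3}\right),
\]
so that the entire question reduces to the sign and size of the reaction coefficient $c - \frac{w_3}{v+D_3}$. The hypothesis $c < \frac{w_3}{D_3}$ forces this coefficient to be negative once $v$ has relaxed toward zero — which is precisely the regime in which \cite{AA02} expects boundedness — but the key observation, exactly as in the ODE case, is that for a genuinely large $v_0$ there is a nontrivial time window on which $v$ stays large and the coefficient is bounded below by a fixed positive constant. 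Since the PDE cannot be integrated explicitly as the ODE was (the term $\Delta r$ obstructs the $\psi$-representation), this lower bound must be produced and then fed into a comparison principle.

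Concretely, I would first fix $v_0$, with $v_0^{-}:=\inf_{\Omega} v_0$ so large that $\frac{w_3}{v_0^{-}/2+D_3}\le \frac{c}{2}$. The middle-predator equation gives the pointwise estimate $\partial_t v - d_2\Delta v \ge -a_2 v - w_2 r$, obtained by discarding the nonnegative term $w_1\frac{uv}{u+D_1}$ and using $\frac{v}{v+D_2}\le 1$. Because the solution of Proposition \ref{prop:ls} is classical and continuous in $C(\overline{\Omega})$, the comparison principle then shows that $\inf_{\Omega} v(t,\cdot)$ cannot drop below $v_0^{-}/2$ before a time $\tau$ of order $v_0^{-}/(w_2\sup r)$. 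On $[0,\tau]$ one therefore has the clean scalar differential inequality
\[
\partial_t r - d_3 \Delta r \ge \tfrac{c}{2}\, r^2 .
\]
On this interval I would invoke a standard blow-up comparison. With Neumann data and spatially homogeneous $(v_0,r_0)$ the solution stays spatially constant and is dominated below by the ODE $z'=\frac{c}{2}z^2$, $z(0)=r_0^{-}$, which blows up at time $2/(c\,r_0^{-})$; for Dirichlet data I would instead test against the principal eigenfunction $\phi_1>0$ of $-\Delta$ and set $R(t)=\int_\Omega r\,\phi_1$, whereupon Green's identity and Jensen's inequality yield $R'\ge -d_3\lambda_1 R + \frac{c}{2}R^2$, again a blowing-up scalar ODE once $R(0)$ is large. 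In either case the comparison object explodes at a time scaling like $1/r_0^{-}$, so taking $r_0$ large places that explosion strictly inside $[0,\tau]$ and forces $\|r(t,\cdot)\|_\infty\to\infty$ at some finite $T^{**}$, consistent with the bound $T^{**}<T$ of the statement, via \cite{S83} and Proposition \ref{prop:ls}.

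The step I expect to be the genuine obstacle is the lower bound on $v$ over the blow-up window, because the coupling is adversarial: as $r$ grows the term $-w_2\frac{vr}{v+D_2}$ drives $v$ downward, and near the blow-up time $r$ is enormous, so in principle $v$ could fall through the threshold before $r$ explodes. The resolution is a matching of scales. Both the window length and the comparison blow-up time are reciprocal in the data, with $\tau\sim v_0^{-}/(w_2 r_0^{-})$ against a blow-up time $\sim 1/(c\,r_0^{-})$, so their ratio is of order $c\,v_0^{-}/w_2$ and is governed by $v_0^{-}$ alone. The argument therefore closes only under the correct order of quantifiers: one must enlarge $v_0$ first, making $\tau$ exceed the comparison blow-up time uniformly in $r_0$, and only afterwards send $r_0\to\infty$. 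Making this precise — in particular verifying that $\sup_\Omega r$ remains comparable to $r_0^{-}$ long enough that $\tau$ does not collapse faster than $1/r_0^{-}$, and, in the Dirichlet case, checking that the interior weighting by $\phi_1$ absorbs the sign change of the coefficient near $\partial\Omega$ where $v\to 0$ — is the only delicate point; the remainder is the comparison principle of \cite{S83}.
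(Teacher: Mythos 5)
Your plan is the same in spirit as the paper's own proof (hold the reaction coefficient $c-\frac{w_3}{v+D_3}$ above $\frac{c}{2}$ on an initial window by taking $v_0$ large, then run an ODE/Kaplan comparison inside that window), but the step you flag as ``the only delicate point'' is not a technicality that can be made precise: it is exactly where the argument fails, and it cannot be repaired. Your window estimate $\tau\sim v_0^{-}/(w_2\sup r)$ with $\sup r\sim r_0^{-}$ is inconsistent with the very mechanism you are running. On any interval where $\partial_t r-d_3\Delta r\ge\frac{c}{2}r^2$ holds, $r$ dominates $z(t)=r_0^{-}\bigl(1-\frac{c}{2}r_0^{-}t\bigr)^{-1}$, so $\sup r$ does not stay of order $r_0^{-}$: it diverges as $t\uparrow T_{1/2}^{*}:=2/(cr_0^{-})$, and so does the cumulative predation, since
\begin{equation*}
\int_0^t r\,ds\ \ge\ \int_0^t z(s)\,ds\ =\ \frac{2}{c}\,\ln\!\left(\frac{1}{1-\frac{c}{2}r_0^{-}t}\right)\ \longrightarrow\ \infty\qquad (t\uparrow T_{1/2}^{*}).
\end{equation*}
Set $v_{**}:=\frac{2w_3}{c}-D_3$ (the level where the coefficient equals $\frac{c}{2}$) and $\kappa:=\frac{w_2 v_{**}}{v_{**}+D_2}$. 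Already for spatially homogeneous Neumann data, where the PDE reduces to the ODE system, one has, while $v\ge v_{**}$, the bound $v'\le w_1 v-\kappa r\le w_1v-\kappa z(t)$, hence
\begin{equation*}
v(t)\ \le\ e^{w_1 t}\,v_0\ -\ \frac{2\kappa}{c}\,\ln\!\left(\frac{1}{1-\frac{c}{2}r_0^{-}t}\right),
\end{equation*}
whose right-hand side tends to $-\infty$ as $t\uparrow T_{1/2}^{*}$. Therefore $v$ exits $\{v\ge v_{**}\}$ strictly before $T_{1/2}^{*}$, for \emph{every} choice of $v_0,r_0^{-}$; quantitatively the exit time is roughly $T_{1/2}^{*}\bigl(1-e^{-cv_0/(2\kappa)}\bigr)$. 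Both the window and the comparison blow-up time scale like $1/r_0^{-}$, and their ratio, which depends only on $v_0$, is always strictly less than $1$ and merely approaches $1$ as $v_0\to\infty$. So your quantifier fix (``enlarge $v_0$ first, then $r_0$'') brings the window closer to the explosion but never past it: the window always slams shut first, with $r$ having grown only to about $r_0^{-}e^{cv_0/(2\kappa)}$ --- enormous, but finite.

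You should also know this is not a defect of your write-up relative to the paper: the paper's proof hides the same hole in the unquantified $\delta$ (its $\delta$ depends on the solution $v$, hence on $r_0^{-}$, and the computation above shows $\delta< 2/(cr_0^{-})$ always, so ``choose $r_0^{-}$ large so that $T^{*}<\delta$'' can never be satisfied); its closing ``standard comparison'' between the ODE and PDE systems is moreover unjustified, since the system is not quasimonotone ($g$ is decreasing in $r$ while $h$ is increasing in $v$). In fact the statement itself is false in the homogeneous-Neumann/ODE setting: with $v_{*}:=\frac{w_3}{c}-D_3>0$ and $R_1:=\frac{w_1(v_{*}+D_2)}{w_2}$, if $r\to\infty$ at a finite time $T$ then $r>R_1$ near $T$, and $r'\le cr^2$ forces $r(t)\ge\frac{1}{c(T-t)}$, so $\int^T r=\infty$; Gr\"onwall then shows $v$ cannot stay above $v_{*}$ up to $T$, while a crossing of $v_{*}$ from below would require $w_1v_{*}\ge\frac{w_2v_{*}r}{v_{*}+D_2}$, i.e.\ $r\le R_1$, which is impossible near $T$; hence $v<v_{*}$ (so $c-\frac{w_3}{v+D_3}<0$ and $r$ is nonincreasing) on a left neighborhood of $T$, contradicting $r\to\infty$. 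The numerically observed ``blow-up'' is consistent with growth to the finite but astronomically large value $r_0^{-}e^{cv_0/(2\kappa)}$. The honest conclusion of your analysis should be that the delicate step is fatal, for your proof and for the paper's.
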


Note the above argument easily generalises to the case $c < k \left(\frac{w_3}{D_3}\right)$, where 

\begin{equation}
0< k = \left(\frac{w_0 b_1 D_3}{w_1\left( a_1 + \frac{(a_1)^2}{4a_2}\right) + w_0 b_1 D_3}\right) < 1.
\end{equation}

Thus one can also state the following corollary

\begin{corollary}
\label{cor:c2}
Consider the three species food chain model \eqref{eq:(1.1)}-\eqref{eq:(1.3)}. Even if

\begin{equation}
\label{eq:l1r}
c < \left(  \frac{w_0 b_1 D_3}{w_1\left( a_1 + \frac{(a_1)^2}{4a_2}\right) + w_0 b_1 D_3}\right) \frac{w_3}{D_3},
\end{equation}

solutions to \eqref{eq:(1.1)}-\eqref{eq:(1.3)} with certain initial data are not bounded forward in time. In fact the solution $r$ to \eqref{eq:(1.3)} can blow-up in finite time, that is

\begin{equation}
\label{eq:buc}
\mathop{\lim}_{t \rightarrow T^{***}} ||r(t)||_{\infty} \rightarrow \infty, 
 \end{equation}
 as long as the initial data $v_0, r_0$ are large enough. Here  $T^{***} < T < \infty$. 
\end{corollary}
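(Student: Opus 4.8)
The plan is to obtain Corollary \ref{cor:c2} as a direct consequence of Theorem \ref{thm:tp1}, exploiting the fact that $0 < k < 1$. The first observation I would make is purely logical: since $k \in (0,1)$, one has $k\,\frac{w_3}{D_3} < \frac{w_3}{D_3}$, so the hypothesis \eqref{eq:l1r}, namely $c < k\,\frac{w_3}{D_3}$, is strictly \emph{more} restrictive than the hypothesis $c < \frac{w_3}{D_3}$ of Theorem \ref{thm:tp1}. In particular, any parameter choice satisfying \eqref{eq:l1r} automatically satisfies $c < \frac{w_3}{D_3}$. Thus Theorem \ref{thm:tp1} applies verbatim, and we conclude that $r$ blows up in finite time for $v_0, r_0$ large enough, with blow-up time $T^{***} < T < \infty$ as claimed in \eqref{eq:buc}.

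To make the argument self-contained rather than merely invoking Theorem \ref{thm:tp1}, I would re-run its proof under the sharper threshold. First I would integrate \eqref{eq:(1.3)} to write $r = 1/\psi(t)$ with $\psi(t) = \frac{1}{r_0^-} - ct + w_3\int_0^t \frac{ds}{v+D_3}$, noting $\psi(0) > 0$. Next, using $\lim_{t\to 0}\frac{1}{t}\int_0^t \frac{ds}{v+D_3} = \frac{1}{v_0^- + D_3}$ and choosing $v_0^-$ large enough that $\frac{1}{v_0^- + D_3} < \frac{c}{2w_3}$, I would produce $\delta > 0$ with $\frac{1}{t}\int_0^t \frac{ds}{v+D_3} < \frac{c}{2w_3}$ on $(0,\delta)$, which forces $\psi(t) < \frac{1}{r_0^-} - \frac{c}{2}t$ there. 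Choosing $r_0^-$ large enough that the root $T^* = \frac{2}{c\, r_0^-}$ of the right-hand side lies inside $(0,\delta)$ gives $\psi(T^*) < 0$; the intermediate value theorem then yields a zero $T < T^*$ of $\psi$, hence blow-up of the ODE solution, and a standard comparison argument \cite{S83} transfers this to the PDE system \eqref{eq:(1.1)}-\eqref{eq:(1.3)}.

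There is essentially no genuine obstacle here: the entire content of the corollary is the remark that \eqref{eq:l1r} implies $c < \frac{w_3}{D_3}$, after which the work has already been carried out in Theorem \ref{thm:tp1}. The only point requiring mild care is bookkeeping on the constants, namely verifying that the smaller threshold $k\,\frac{w_3}{D_3}$ merely shrinks the admissible range of $c$ and therefore cannot interfere with the construction of $\delta$ and $T^*$, both of which depend on $c, w_3, D_3, v_0^-, r_0^-$ but not on $k$. The significance of recording the result separately is rhetorical rather than mathematical: \eqref{eq:l1r} is precisely the boundedness condition \eqref{eq:ac1} asserted in \cite{AA02}, so the corollary exhibits finite time blow-up under the very hypothesis alleged to preclude it.
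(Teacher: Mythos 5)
Your proposal is correct and takes essentially the same route as the paper, which also obtains the corollary by observing that the argument of Theorem \ref{thm:tp1} goes through under hypothesis \eqref{eq:l1r}. If anything, your framing is the cleaner one: since $0 < k < 1$, condition \eqref{eq:l1r} implies $c < \frac{w_3}{D_3}$, so Theorem \ref{thm:tp1} applies verbatim (a specialization, not the ``generalisation'' the paper loosely speaks of), and your bookkeeping remark that $\delta$ and $T^*$ depend only on $c, w_3, D_3, v_0^-, r_0^-$ and not on $k$ is exactly why no new argument is needed.
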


\begin{remark}
We remark that the methods of this section can be directly applied to prove blow up in the ODE case as well. However the earlier proof via Theorem \ref{thm:r1} has the advantage, that we can explicitly give a sufficient condition on the largeness of the data, required for blow-up.
Also, not just the $L^{\infty}(\Omega)$ norm, but every $L^p(\Omega)$ norm, $p \geq 1$, blows up. This is easily seen in analogy with the equation $r_t=d_3\Delta r + \delta r^2$, and an application of the first eigenvalue method. Also note the blow-up times $T^*, T^{**}$ for the PDE case are not to be confused with the blow-up times $T^*, T^{**}$ for the ODE case.  
\end{remark}

\section{Numerical validation}

In this section we numerically simulate the ODE system \eqref{eq:x1o}-\eqref{eq:x3o}, as well as the PDE system \eqref{eq:(1.1)}-\eqref{eq:(1.3)}, (in 1d and 2d), in order to validate our results Theorem \ref{thm:r1}, Theorem \ref{thm:d1}, Theorem \ref{thm:tp1} and Corollary \ref{cor:c2}. To this end we select the following parameter range,

 \begin{eqnarray}
 \label{eq:para}
 && a_1=1,b_1=0.5,w_0=0.55,D_0=10,a_2=1,w_1=0.1,D_1=13,w_2=0.25,D_2=10, \\ \nonumber 
 &&  c=0.055,w_3=1.2,D_3=20.
 \end{eqnarray}
 
  These parameters satisfy condition \eqref{eq:ac1}, from \cite{AA02}. 
  
  \begin{equation}
\label{eq:lnv}
c = 0.055 < 0.0587 = \left(  \frac{w_0 b_1 D_3}{w_1\left( a_1 + \frac{(a_1)^2}{4a_2}\right) + w_0 b_1 D_3}\right) \frac{w_3}{D_3}.
\end{equation}

Despite this, we see finite time blow-up. The systems are simulated in MATLB R2011a. For simulation of the ODE systems we have used the standard $ode45$ routine which uses a variable time step Runge Kutta method. To explore the spatiotemporal dynamics of the PDE system in one and two dimensional spatial domain, the system of partial differential equations is numerically solved using a finite difference method. A central difference scheme is used for the one dimensional diffusion term, whereas standard five point explicit finite difference scheme is used for the two dimensional diffusion
terms. The system is studied with positive initial condition and Neumann boundary condition in the spatial domain $0 \leq x \leq L_{x}$, $0 \leq y \leq L_y$, where $L_x=L_y=\pi$. 
Note, our proof of blow-up, allows for Dirichlet, Neumann or Robin type boundary conditions.
Simulations are done over this square domain with spatial resolution $\Delta x = \Delta y = 0.01$, and time step size $\Delta t = 0.01$. We next present the results of our simulations.

\begin{figure}[!ht]
	\begin{center}
	 {\includegraphics[scale=0.30]{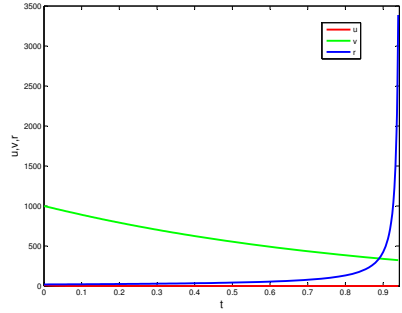}}
		\end{center}
\caption{These figures show blow up in the ODE case.}
\label{fig:1}
\end{figure}

\begin{figure}[!ht]
	\begin{center}
	{\includegraphics[scale=0.30]{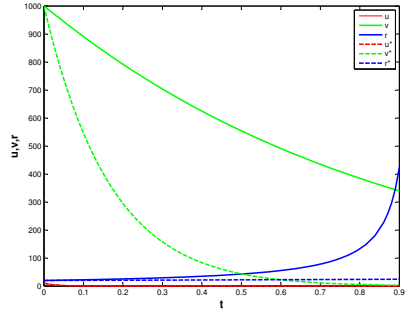}}
		\end{center}
\caption{We also compare the original system with the modified system to illustrate our idea, to show blowup.}
\label{fig:ODE}
\end{figure}

\begin{figure}[!ht]
	\begin{center}
		{\includegraphics[scale=0.30]{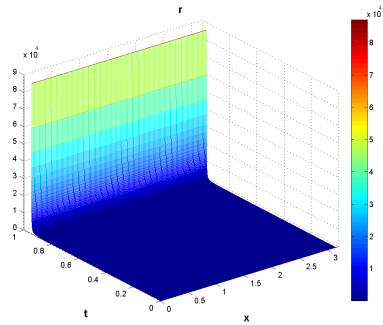}}
		\end{center}
\caption{These figures show blow up in the 1d PDE case. }
\label{fig:PDE1}
\end{figure}

%
%

\begin{figure}[!ht]
	\begin{center}
		{\includegraphics[scale=0.3]{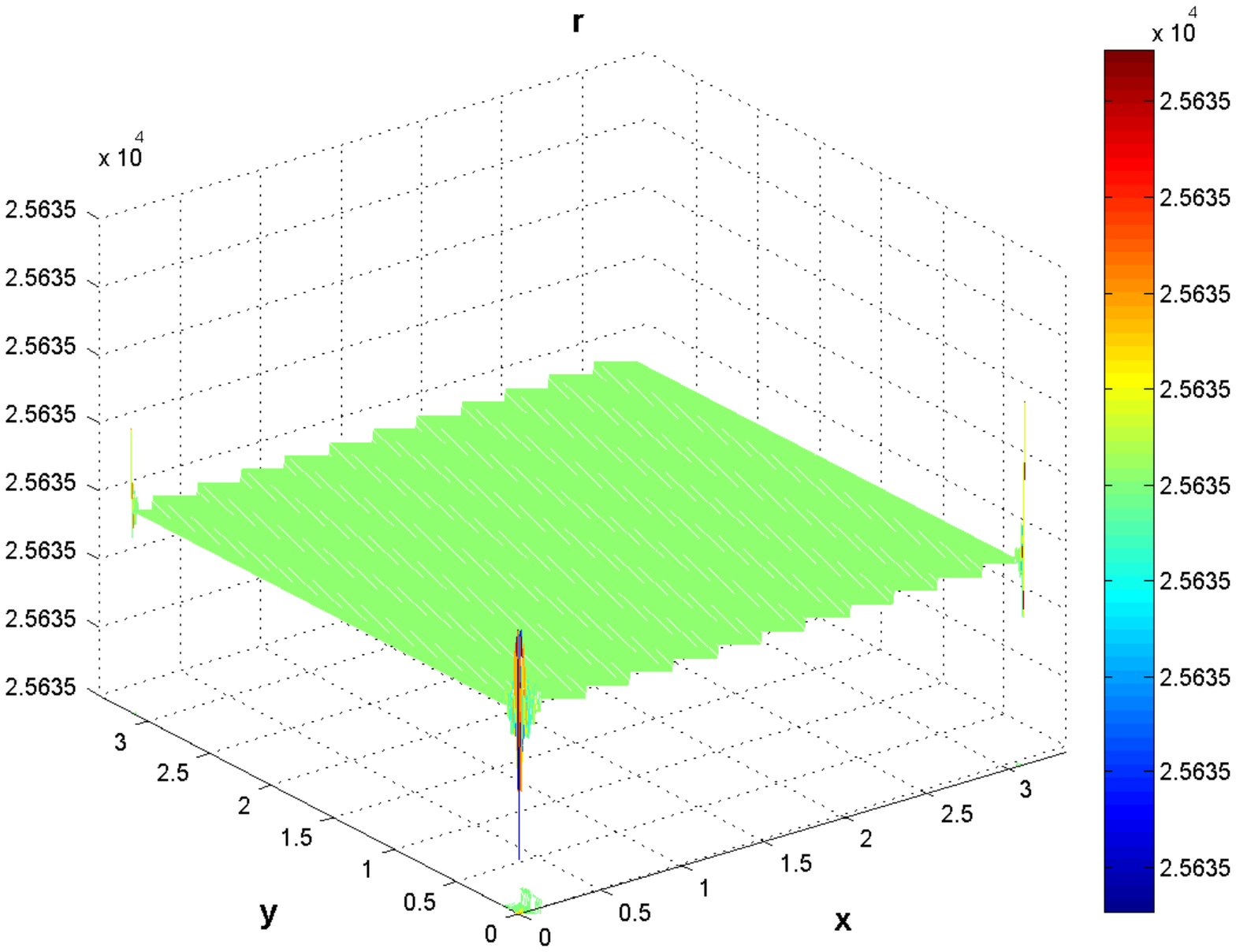}}
			\end{center}
		\caption{These figures show a surface plot of blowup in the 2d PDE case .}
		\label{fig:PDE2}
	\end{figure}

\section{Conclusion}
In the current letter we have shown that the solutions to the system \eqref{eq:x1o}-\eqref{eq:x3o}, modeling a tri-trophic food chain can exhibit finite time blow-up under the condition \eqref{eq:ac1} from theorem \ref{thm:aziz}, as long as the initial data is large enough. This is also true in the case of the spatially explicit model.  
 Thus the basin of attraction of the invariant set $\mathcal{A}$, explicitly constructed in \cite{AA02}, \emph{is not all of} $\mathbb{R}^3_{+}$, as claimed in \cite{AA02}. Furthermore system \eqref{eq:x1o}-\eqref{eq:x3o} \emph{is not dissipative} in all of $\mathbb{R}^3_{+}$, also as claimed in \cite{AA02}. For a numerical valiadation of these results please see figures \ref{fig:ODE}, \ref{fig:PDE1}, \ref{fig:PDE2}.

However, the model posesses very rich dynamics, in the parameter region

\begin{equation}
\label{eq:gc}
\frac{w_3}{v+D_3} < c < \frac{w_3}{D_3}.
\end{equation}

Thus an extremely interesting open question is, what is the basin of attraction 
for an appropriately defined and constructed $\mathcal{A}$?
This is tantamount to asking, which sorts of initial data lead to globally existing solutions, under the dynamics of \eqref{eq:x1o}-\eqref{eq:x3o}, and the parameter range \eqref{eq:gc}? The same questions can be asked, in the case of the spatially explicit model.

\section{Acknowledgment} 
The present research of NK is supported by UGC under Raman fellowship, Project no. 5-63/2013(c) and IIT Mandi under the project no. IITM/SG/NTK/008 and DST under IU-ATC phase 2, Project no. SR/RCUK-DST/IUATC Phase 2/2012-IITM(G).


\begin{thebibliography}{10}

\bibitem{AA02}
\newblock Aziz-Alaoui, M. A. 
 \newblock \emph{Study of a Leslie-Gower type tri-trophic population model},
 \newblock Chaos, Solitons $\&$ Fractals, 14(8), 1275-1293, 2002.
 
 
  

  \bibitem{L02}
\newblock Letellier, C. and Aziz-Alaoui, M. A. 
\newblock \emph {Analysis of the dynamics of a realistic ecological model},
\newblock Chaos, Solitons $\&$ Fractals, 13, 95-107, 2002.


\bibitem{L48}
\newblock Leslie, P.H. 
\newblock \emph{Some further notes on the use of matricies in population mathematics},
\newblock Biometrika, 35, 213-245, 1948.
 


\bibitem{N13}
\newblock  Kumari, N.
\newblock Pattern Formation in Spatially Extended Tritrophic Food Chain Model Systems: Generalist versus Specialist Top Predator, 
\newblock ISRN Biomathematics, Volume 2013 (2013), Article ID 198185. 


 
\bibitem{P10}
\newblock Parshad, R.D. and Upadhyay, R. K. 
\newblock \emph{Investigation of long time dynamics of a diffusive three species aquatic model},
\newblock  Dynamics of Partial Differential Equations, 7(3), 217-244, 2010.



\bibitem{PK13}
\newblock  Parshad, R.D. , Abderrahmanne, H., Upadhyay, R. K. and Kumari, N.
\newblock \emph{Finite time blowup in a realistic food chain model}, 
\newblock ISRN Biomathematics, Volume 2013 (2013), Article ID 424062. 




 \bibitem{U98}
\newblock Upadhyay, R. K., Iyengar, S.R.K. and Rai, V. 
\newblock \emph{Chaos: an ecological reality?},
\newblock International Journal of Bifurcations and Chaos, 8, 1325-1333, 1998.

\bibitem{U97}
\newblock Upadhyay, R. K. and Rai, V. 
\newblock \emph{ Why chaos is rarely observed in natural populations?},
\newblock Chaos, Solitons $\&$ Fractals,  8(12), 1933-1939, 1997.
 

\bibitem{UIR00}
\newblock  Upadhyay, R.K.,  Iyengar, S.R.K. and  Rai, V.
\newblock \emph{Stability and complexity in ecological systems},
 \newblock  Chaos, Solitons $\&$ Fractals, 11,  533-542, 2000.





\bibitem{P13} 
\newblock Parshad, R. D., Kumari, N., Kasimov, A. R., Abderrahmane, H. A. 
\newblock Turing Patterns and long time behavior in a three-species model, 
\newblock In revision, Mathematical Biosciences, 2014.

\bibitem{U7} 
\newblock Upadhyay, R. K., Naji, R. K., Kumari, N., 
\newblock Dynamical complexity in some ecological models: effects of toxin production by phytoplanktons, 
\newblock Nonlinear analysis: Modeling and Control, 123-138, Vol. 12, No.1, 2007.

\bibitem{U5} 
\newblock Upadhyay, R. K. and Iyengar, S.R.K., Effect of seasonality on the dynamics of 2 and 3 species prey-predator systems, 
\newblock Nonlinear Analysis: Real World Applications, 6, 509-530, 2005.


\bibitem{G05} 
 \newblock Gakkhar, S., Singh, B., 
\newblock Complex dynamic behavior in a food web consisting of two preys and a predator, 
\newblock Chaos Solitons and Fractals, 24, 789-801, 2005.

\bibitem{F64}  Friedman, A., Partial Differential Equations of Parabolic Type.
Prentice Hall Englewood Chiffs. N. J. 1964.

\bibitem{H84}  Henry, D., Geometric Theory of Semi-linear Parabolic Equations.
Lecture Notes in Mathematics 840, Springer-Verlag, New-York, 1984.

\bibitem{P83}  Pazy, A., Semigroups of Linear Operators and Applications to
Partial Differential Equations. Applied Math. Sciences 44, Springer-Verlag,
New York, 1983.

\bibitem{R84} Rothe, F., Global Solutions of Reaction-Diffusion Systems, Lecture
Notes in Math. 1072, Springer-Verlag, Berlin, 1984.

\bibitem{S83}  Smoller, J., Shock Waves and Reaction-Diffusion Equations,
Springer-Verlag, New York, 1983.

\end{thebibliography}
 \end{document}